\newif\ifANONYMOUS	
\newcommand{\Paragraph}[1]{\medbreak\noindent\textbf{#1.}}
\newcommand{\Z}{\mathbb Z}                      
\newcommand{\fp}{\mathbb{F}_p}                  
\newcommand{\fptwo}{\mathbb{F}_{p^2}}           
\newcommand{\infinity}{\mathcal{O}}             
\newcommand{\isep}{\mathrel{{.}\,{.}}\nobreak}  
\newcommand{\Ec}{{E}}   
\newcommand{\Pt}{{P}}   
\newcommand{\Qt}{{Q}}   
\newcommand{\xdbl}{\texttt{xdbl}}   
\newcommand{\xdble}{\texttt{xdble}} 
\newcommand{\xtpl}{\texttt{xtpl}}   
\newcommand{\xtple}{\texttt{xtple}} 
\newcommand{\keygen}{\texttt{keygen}}   
\newcommand{\derive}{\texttt{derive}}   
\newcommand{\Decaps}{\texttt{Decaps}}   
\newcommand{\pkalice}{\textrm{pk}_2}    
\newcommand{\skalice}{\textrm{sk}_2}    
\newcommand{\pkbob}{\textrm{pk}_3}      
\newcommand{\skbob}{\textrm{sk}_3}      
\title{
    Faulty isogenies: a new kind of leakage
}
\author{
    Gora Adj \inst{1}
    \and
    Jes\'us-Javier Chi-Dom\'inguez \inst{2}
    \and
    V\'{i}ctor Mateu \inst{2}
    \and
    Francisco Rodr\'{i}guez-Henr\'{i}quez \inst{2}
}
\authorrunning{G. Adj et al.}
\institute{%
    Departament de Matem\`{a}tica, Universitat de Lleida, Spain
    \\
    \email{gora.adj@gmail.com}
    \and 
    Cryptography Research Centre, Technology Innovation Institute, Abu Dhabi, UAE 
    \\
    \email{\{jesus.dominguez,victor.mateu,francisco.rodriguez\}@tii.ae}
}
\begin{document}
\maketitle

\begin{abstract}
In SIDH and SIKE protocols, public keys are defined over quadratic extensions of prime fields.
We present in this work a projective invariant property characterizing affine Montgomery curves defined over prime
fields.
We then force a secret $3$-isogeny chain to repeatedly pass through a curve defined over a prime field in order to
exploit the new property and inject zeros in the $A$-coefficient of an intermediate curve to successfully recover
the isogeny chain one step at a time.
Our results introduce a new kind of fault attacks applicable to SIDH and SIKE.

\keywords{isogeny-based cryptography \and fault injection attack}
\end{abstract}

\section{Introduction}\label{sec:introduction}

In a seminar held in 1997, Couveignes proposed an isogeny-based scheme for mimicking the Diffie-Hellman key exchange
protocol~\cite{Smith18}.
Couveignes notes were later posted in~\cite{Couveignes06}.
The first published isogeny-based cryptographic primitive was by Charles, Lauter and Goren
in~\cite{CharlesLG09}, where they proposed a hash function whose collision resistance was extracted from
the problem of path-finding in supersingular isogeny graphs.
As early as 2006, Rostovtsev and Stolbunov introduced in~\cite{StolbunovIsogenyStar}  isogeny-based cryptographic
schemes (this was followed by Stolbunov in~\cite{Stolbunov10}) as potential candidates for post-quantum cryptography.
In his 2010 paper~\cite{Stolbunov10}, Stolbunov proposed a Diffie-Hellman-like protocol whose security guarantees were
based on the difficulty of finding smooth-degree isogenies between ordinary elliptic curves.

Jao and De Feo \cite{JaoF11} proposed in late 2011 a Diffie-Hellman key-exchange scheme, which has as underlying hard
problem the difficulty of constructing isogenies between supersingular elliptic curves defined over quadratic extension
prime field $\fptwo$.
Within the context of the NIST standardization process~\cite{nist_pqc}, it was proposed in~\cite{sike} an isogeny-based
key exchange protocol named SIKE, which can be seen as an SIDH variant equipped with a key encapsulation mechanism.
SIKE was selected as one of the five third-round alternate KEM candidates of the NIST contest.

As we will see in~\autoref{subsec:si:dh-ke}, SIDH and SIKE public keys include not only the image curve of a secret
isogeny, but also the auxiliary images of the other party's two torsion basis points.
It has been long suspected that this extra information can help SIKE cryptanalysis, but until now no efficient passive
attack has been found for SIKE, in spite of several potentially promising results~\cite{Petit17,QuehenKLMPPS21}.
\footnote{See~\cite[\S7]{Costello21} for a compelling  argument about why is unlikely that these torsion point attacks
will ever dent the security provided  by SIDH and SIKE.}

On the contrary, in 2016, Galbraith, Petit, Shani and Ti~\cite{GalbraithPST16} presented an active attack against
SIDH, which exploits the additional torsion-point information included in Alice's public key.
The attack in~\cite{GalbraithPST16} consists of sending to Bob a tuple of manipulated torsion points that appears to be
Alice's legitimate public key~\footnote{\label{ft:valida} There is no known efficient approach for validating public
keys in SIDH or SIKE.
In fact, key validation is so problematic in SIDH or SIKE that if an effective algorithm for validating public keys
would ever be found, then such procedure could also be used to efficiently recover secret keys from public
keys~\cite{Smith18,GalbraithV18}}.
Then, the attacker observes if her public key manipulation produces (or not) errors in the protocol and
by doing so, starting from the least significant bit, she can guess one bit at a time.
This allows recovery of the secret key after a  linear number of queries with respect to the size of Bob's exponent
$e_3$ (\emph{cf.}~\autoref{sec:background}).

As a countermeasure to their attack, the authors of~\cite{GalbraithPST16} recommended applying a variant of the
Fujisaki-Okamoto transform~\cite{FO99}.
Obviously, this mitigation entailed a significant performance cost and SIKE was submitted to the NIST post-quantum
cryptography standardization program as the combination of SIDH along with the Hofheinz–H\"ovelmanns–Kiltz
transform~\cite{HHK17} (a variant of~\cite{FO99}), for enabling a key encapsulation mechanism.

In 2017, Ti~\cite{DBLP:conf/pqcrypto/Ti17} proposed a fault attack that allows to recover Alice's secret isogeny
$\phi_A$ exploiting the fact that if the image of a random point $S \in E[p+1]$ under $\phi_A$ is revealed, then with
high probability, the attack is successful~\cite[Remark 3]{TassoFMP21}.
To accomplish this, the attacker must succeed injecting a fault \emph{before} Alice starts computing her public key.
Recall that in a normal execution of Alice's key generation procedure, she must compute the images of fixed basis
points of degree three under her secret isogeny $\phi_A$.
The attacker's goal is to try to perturb any of these points by injecting a fault in a timely manner.
Recently, the authors of~\cite{TassoFMP21} reported a real implementation of Ti's attack that obtained a  small
effectiveness (mainly due to the difficulty of producing electromagnetic injections at the right moment).

Since Ti's attack only deals with injecting a fault in the key generation procedure, it works equally well for SIDH and
SIKE.
However, Ti's attack always produces an error in the SIDH/SIKE shared secret computation.
Hence, the recovered secret key can only be useful if the key generation procedure commits the sin of using the same
private key, something that should not happen in any
proper implementation of SIDH or SIKE.\footnote{In~\cite{TassoFMP21} the authors suggest multipartite SIDH key exchange
as a more plausible scenario for Ti's attack.}
Further, there exist a simple and inexpensive countermeasure to prevent Ti's attack, which is checking the order or
correctness of the auxiliary points before to their publication~\cite{DBLP:conf/pqcrypto/Ti17,TassoFMP21,Costello21}.

Generic fault injection and side-channel attacks against the Fujisaki-Okamoto-based key encapsulation
mechanism were presented in~\cite{XagawaIUTH21,UenoXTITH21}. In~\cite[\S 4.3]{UenoXTITH21}, the authors
applied the attack of~\cite{GalbraithPST16} to SIKE. To do so, the authors implemented a plaintext-checking oracle
by means of a side-channel exploitation.
The attacker compares the side-channel leakage output by the SIKE decapsulation block when processing a valid
ciphertext with a modified one.
The combination of the active attack of \cite{GalbraithPST16} with the plaintext-checking oracle, allows to have a full
SIKE key recovery by invoking less than 500 oracle accesses (see~\cite[Table 2]{UenoXTITH21}).

In~\cite{campos21},  Campos,  Kr\"{a}mer and  M\"uller presented a safe-error attack against SIKE and CSIDH protocols.
They launched a real attack against the SIKEp434 instantiation of SIKE implemented on a Cortex-m4 processor.
The attack achieved almost 100\% of full key recovery at the price of five fault injections per single bit for a total
of some $1,090$ injections.
We stress that the attack in~\cite{campos21} does not exploit any potential vulnerability
on the isogeny computations of SIKE.

Recently, De Feo, El Mrabet, Gen\^{e}t Kalu{\dj}erovi\'{c}, Guertechin, Ponti\'{e}, and Tasso showed SIKE is vulnerable
to zero-value attack~\cite{cryptoeprint:2022:054}.

\Paragraph{Our Contributions}
We present a new fault-injection adaptive attack on SIDH, which could apply to SIKE under the same plaintext-checking
oracle as in~\cite{UenoXTITH21}.
Our analysis centers on Bob's $3$-isogeny chain computation and relies on the following:

\begin{itemize}
    \item A new $\fp$-invariant property to characterize Montgomery projective curves defined over $\fp$.
    \item Using this $\fp$-invariant, force Bob's 3-isogeny chain computation to pass through an $\fp$-curve.
\end{itemize}

With these results, our fault-injection adaptive attack flow can be summarized as follows:
\begin{enumerate}
    \item \textbf{Send an altered version of Alice's public key to Bob}.
    By using an appropriate choice of points for Alice's
    public key, the attacker can force Bob to return to the curve $E \colon y^2 = x^3 +6x^2+x$ when processing the
    $i$th 3-isogeny.
    \item \textbf{Inject a fault in the $(i + 1)$th 3-isogeny output, such that the imaginary part of the Montgomery
    $A$-coefficient is zeroed}.
    This action allows us to guess the $(i+1)$th trit of Bob's private key.
\end{enumerate}

As a proof of concept, we additionally provide a C-code implementation that simulates the injection and verifies the
correctness of the analysis.
The code is freely available at \url{https://github.com/FaultyIsogenies/faulty-isogeny-code}\;.\\

The reminder of this paper is organized as follows.
\autoref{sec:background} presents some basic definitions of elliptic curves and their isogenies.
In~\autoref{sec:analysis}, we give a detailed description of the new fault injection attack.
In particular,~\autoref{subsec:projective} and~\autoref{subsec:faulty3} detail the $\fp$-invariant property and the
core of the attack, respectively.
\autoref{sec:experiments:discussion} illustrates an actual proof-of-concept implementation of the attack.
We also discuss in~\autoref{sec:experiments:discussion} potential countermeasures to thwart or mitigate the attack.
Finally, we  draw our concluding remarks  in~\autoref{sec:conclusions}.

\section{Preliminaries}\label{sec:background}

In this section we present some basic mathematical definitions for elliptic curves and isogenies.
These definitions are extended and discussed in more detail in~\cite{Washington08}.
Furthermore, we briefly describe the SIDH key agreement protocol (see \cite{JaoF11,DJP14}).

\subsection{Supersingular elliptic curves and their isogenies}\label{susection:basic}

Let $p>3$ be a prime number, $\fp$ the finite field with $p$ elements, and $\fptwo$ its quadratic extension.
In this paper, we consider only elliptic curves $E$ defined over $\fptwo$ that are supersingular of order
$\#E(\fptwo) = {(p+1)}^2$ and in Montgomery form given by
\begin{align}
    E \colon By^2 = x^3 + Ax^2 + x,
    \label{eq:mont}
\end{align}
for some $A,B \in \fptwo$.
If $A, B \in \fp$, we say that $E$ is defined over $\fp$.

The order $d$ of a point $P \in E(\fptwo)$ is the smallest positive integer such that
\begin{align*}
[d]P &= \underbrace{P + \cdots + P}_\text{$d$ times}  = \infinity.
\end{align*}

The $d$-torsion subgroup, denoted by $E[d]$, is the set of points $\{P \in E(\fptwo)\mid [d]P = \infinity\}$.
If $\gcd(p,d) = 1$, then $E[d]$, as a subgroup of $E$, is isomorphic to $\Z/n\Z \times \Z/n\Z$.
The $j$-invariant of an elliptic curve $E$ in Montgomery form is defined as
\begin{align}
    j(E) = \frac{256{(A^2 - 2)}^3}{A^2-4}.
    \label{eq:jinvariant}
\end{align}
Two elliptic curves have the same $j$-invariant if and only if they are isomorphic over some extension field of $\fp$.
An isogeny $\phi \colon E \to E'$ over $\fptwo$ is a non-zero rational map satisfying $\phi(\infinity) = \infinity$.
Every isogeny is a surjective group homomorphism with finite kernel.
Two elliptic curves $E$ and $E'$ are said to be isogenous over $\fptwo$ if there exists an isogeny
$\phi \colon E \to E'$ defined over $\fptwo$, and this happens if and only if $\#E(\fptwo) = \#E'(\fptwo)$.

Let $\phi$ be an isogeny defined over $\fptwo$, then it can be represented as
\[\phi=(r_1(X), r_2(X) \cdot Y),\]
where $r_1,r_2 \in \fptwo(X)$, and is said to be separable if $r_1'(X) \neq 0$, and inseparable otherwise.
Now, let $r_1(X) = p_1(X)/q_1(X)$, where $p_1,q_1 \in K[X]$ with $\gcd(p_1,q_1)=1$.
Then the degree of $\phi$ is $\mbox{max}(\deg p_1, \deg q_1)$, and $\phi$ is separable means that
$\# \ker \phi = \deg \phi$.
All the isogenies considered in this work will be separable.
For brevity, we say $d$-isogeny for a degree-$d$ isogeny.

The dual of a $d$-isogeny $\phi \colon E \to E' $ is the unique $d$-isogeny $\hat{\phi} \colon E' \to E$ such that
$\hat{\phi} \circ \phi = [d]$ and $\phi \circ \hat{\phi} = [d]$.
If $\phi$ has cyclic kernel $\langle P \rangle$ and $E[d] = \langle P, Q\rangle$, then $\langle \phi(Q) \rangle$
is the kernel of $\hat{\phi}$.

Isogenies of small prime degree $d$ are computed in time $O(d)$ by means of V\'elu's formulas.
For a degree $d^e$, one splits a $d^e$-isogeny as the composition of $e$ small $d$-isogenies.

\subsection{Montgomery x-only point arithmetic}\label{subsec:montgomery-x-only-point-arithmetic}

We introduce here the notation $x(P)$ to refer to the x coordinate of an elliptic curve point $P$.
The Montgomery curve model is especially amenable for performing $x$-only differential point addition as detailed
in~\cite{DBLP:journals/jce/CostelloS18}.

Following the SIDH and SIKE notation, we denote:
\begin{itemize}
    \item $x([2]P) \gets \xdbl(x(P),A)$, and $x([2^{e}]P) \gets \xdble(x(P),A,e)$
    \item $x([3]P) \gets \xtpl(x(P),A)$, and $x([3^{e}]P) \gets \xtple(x(P),A,e)$
\end{itemize}
where $A$ is the constant coefficient of a Montgomery curve $E$.
Given two points $P, Q$ on $E$ and a positive integer $k$, the x-coordinate $x(P + [k]Q)$ is efficiently computed by
means of an $(\log_2{k})$-step Montgomery
three point ladder procedure at a per-step cost of one $\xdbl$, and one differential point addition.

Computing 2-, 3-, and 4-isogenies can be done efficiently by applying V\'elu's formulas~\cite{Velu71}, but there is a
more efficient way.
Indeed, the idea of $x$-only point addition can be extended to $x$-only isogenies, which are computed from the
$x$-coordinates of the kernel points ~\cite{DBLP:conf/asiacrypt/CostelloH17,DBLP:conf/pqcrypto/Renes18}.
Let $K$ be a point of order $d\in \{2,3,4\}$, and let $\phi \colon E \to E'$ be the isogeny with kernel
$\langle K \rangle$.
There are two isogeny built-in functions:
\begin{itemize}
    \item Isogeny construction: $A',\;\text{coeff} \gets \texttt{xisog}(x(K))$
    \item Isogeny evaluation: $x(\phi(Q)) \gets \texttt{xeval}(x(Q),\;\text{coeff})$.
\end{itemize}
The $\texttt{xisog}$ function computes the $A$-coefficient of the codomain curve $E'$, together with some data
$\text{coeff}$ that is needed in the $\texttt{xeval}$ function and related to the kernel points.
In $\texttt{xeval}$, one pushes the point $Q$ through the isogeny.

From now on, when we take points as inputs and outputs of $\texttt{xisog}$ and $\texttt{xeval}$ in order to alleviate
the notation, it should be understood the x-coordinate of these points.
As well, for a curve point $P$, writing $P \in \fp$ means here $x(P) \in \fp$.
For $d$-isogenies, we will have $\texttt{xisog}d()$, $\texttt{xeval}d()$.

\subsection{The SIDH protocol at a glance}\label{subsec:si:dh-ke}

SIDH is a key agreement scheme based on computations of isogenies between supersingular montgomery curves.
The protocol consists of 2 algorithms: $\keygen$ and $\derive$.
It also provides the setup information required by the two parties, Alice and Bob, to run the algorithms:

\begin{itemize}
    \item A prime $p = 2^{e_2}3^{e_3} - 1$ and $\fptwo = \fp[i]/(i^2 + 1)$.
    \item The starting curve $E/ \fptwo: y^2 = x^3 + 6x^2 + x$ with $ \# E(\fptwo) = (2^{e_2}3^{e_3})^2$ and
    $j$-invariant $j(E) = 287496$.
    \item The points $P_A, Q_A, D_A = P_A - Q_A $ of order $2^{e_2}$.
    \item The points $P_B, Q_B, D_B = P_B - Q_B$ of order $3^{e_3}$.
\end{itemize}

The first algorithm $\keygen$ allows both Alice and Bob to generate their respective public and private keys.
Alice:
\begin{enumerate}
    \item Randomly selects her private key $\skalice$ from $ \llbracket 1 \isep 2^{e_2 - 1}-1 \rrbracket$.
    \item Computes $R_A = P_A + [\skalice]Q_A$.
    \item Finds the $2^{e_2}$-isogeny $\phi_A$ generated by $R_A$.
    \item Pushes $P_B$, $Q_B$, and $D_B$ through the isogeny $\phi_A$ to get her public key
    $\pkalice = (\phi_A(P_B), \phi_A(Q_B), \phi_A(D_B))$.
\end{enumerate}

Likewise, Bob:
\begin{enumerate}
    \item Randomly selects his private key $\skbob$ from $ \llbracket 1 \isep 3^{e_3 - 1}-1 \rrbracket$.
    \item Computes $R_B = P_B + [\skbob]Q_B$.
    \item Finds the $3^{e_3}$-isogeny $\phi_B$ generated by $R_B$.
    \item Pushes $P_A$, $Q_A$, and $D_A$ through the isogeny $\phi_B$ to get his public key
    $\pkbob = (\phi_B(P_A), \phi_B(Q_A), \phi_B(D_A))$.
\end{enumerate}

The second algorithm $\derive$ receives, as input, a public key and a private key, and computes, as output, the shared
$j$-invariant.
Alice would run $\derive(\pkbob, \skalice)$ which would perform the following operations:
\begin{enumerate}
    \item Compute $\phi_B(R_A)$ as $P_A' + [\skalice]Q_A'$.
    \item Find $2^{e_2}$-isogeny $\psi_A$ generated by $\phi_B(R_A)$.
    \item Obtain the codomain curve $E_{AB}$ from $\psi_A$.
    \item Compute the $j$-invariant of $E_{AB}$.
\end{enumerate}
Similarly, Bob would run $\derive(\pkalice, \skbob)$ to obtain the same $j$-invariant.
In this case, the algorithm would:
\begin{enumerate}
    \item Compute $\phi_A(R_B)$ as $P_B' + [\skbob]Q_B'$.
    \item Find $3^{e_3}$-isogeny $\psi_B$ generated by $\phi_A(R_B)$.
    \item Obtain the codomain curve $E_{BA}$ from $\psi_B$.
    \item Compute the $j$-invariant of $E_{BA}$.
\end{enumerate}

Alice and Bob have now created a shared secret by computing the $j$-invariant of their respective isomorphic curves
$E_{AB}$ and $E_{BA}$.

\section{New leakage on $\fp$-isogenies}\label{sec:analysis}

Most of the known active attacks on SIDH and SIKE focus on the public elliptic curve points.
For example: following the same notation as~\autoref{subsec:si:dh-ke}, Ti's attack~\cite{DBLP:conf/pqcrypto/Ti17} is
centered on the output points $\phi(P')$,  $\phi(Q')$, and $\phi(D')$ of $\keygen$, where it changes the point
$\phi(D')$ by a random point $\phi(T)$ on the curve $E'$.

In contrast, our attack to SIDH injects faults on the curve coefficients by building public trapdoor instances.
In this section we show how injecting zeros on the $A$-coefficient of Montgomery curves allows deciding whether $A$
belongs to $\fp$ or not.
Besides, that, we show how to build inputs that allow us to trigger these fault injections in order to fully recover
Bob's private key.

\subsection{An $\fp$-invariant on projective representations}\label{subsec:projective}

For efficiency reasons, SIDH performs $3$-isogenies using a projectivized coefficient $A \in \fptwo$ that can be
described as,
$(\alpha \colon \beta) = (\tilde{A} + 2C \colon \tilde{A} - 2C)$ such that, $A=\tilde{A}/C$ for some
$\tilde{A}, C \in \fptwo$.
This representation allows the computations to minimize the number of divisions.
Let us  write $\alpha = a + ib$ and $\beta = c + id$ with $i^2= -1$ and $a,b,c,d \in \fp$.
We want to focus on determining when does the quotient $\tilde{A}/C$ belong to $\fp$.

If we extend the equations we have
\begin{align}
    \begin{split}
        A
        &= \frac{\tilde{A}}{C}
        = \frac{2(\alpha + \beta)}{\alpha - \beta} = \frac{2(a + c) + 2(b + d)i}{(a - c) + (b - d)i} \\
        &= \frac{\big(2(a + c) + 2(b + d)i\big)\big({(a - c) - (b - d)i}\big)}{{(a - c)}^2 + {(b - d)}^2} \\
        &= \frac{\big(2(a^2 - c^2) + 2(b^2 - d^2)\big) +
        \big(2(b + d)(a - c) - 2(a + c)(b - d)\big)i}{{(a - c)}^2 + {(b - d)}^2} \\
        &= \frac{\big(2(a^2 + b^2) - 2(c^2 + d^2)\big) + 4\big(ad - bc\big)i}{{(a - c)}^2 + {(b - d)}^2}
    \end{split}
    \label{eqs:coeffs:three}
\end{align}
From~\autoref{eqs:coeffs:three} we have that $A$ belongs to $\fp$ if and only if $4\big(ad - bc\big) = 0$

A trivial way to have $A$ in $\fp$ would be if $b = d = 0$.
In that case we would obtain the projective A-coefficient $(a \colon c)$.
The following lemma ensures that we can always take $b = d = 0$.

\begin{lemma}\label{lemma:curve}
For any Montgomery curve $E$ with affine $A$-coefficient in $\fp$, if $(a+ib \colon c+id)$, for some $a,b,d,c \in \fp$,
is a projective curve coefficient of $E$, then also is $(a \colon {c})$.
\end{lemma}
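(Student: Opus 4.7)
The plan is to reduce the statement to a single line of computation by observing when two projective pairs $(\alpha:\beta)$ and $(\alpha':\beta')$ yield the same affine Montgomery coefficient $A$ through the formula $A = 2(\alpha+\beta)/(\alpha-\beta)$. A quick manipulation (cross-multiplying and cancelling) shows that $A = A'$ if and only if $\alpha\beta' = \alpha'\beta$, i.e., exactly the usual projective-equality condition on $(\alpha:\beta)$ and $(\alpha':\beta')$. So the lemma is really just: under the hypothesis $A\in\fp$, the pairs $(a+ib\colon c+id)$ and $(a\colon c)$ coincide as projective points over $\fptwo$.

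With that reformulation in hand, the first step is to write out the cross-multiplication for the specific pairs $(\alpha,\beta) = (a+ib,\ c+id)$ and $(\alpha',\beta') = (a,c)$:
\begin{equation*}
\alpha\beta' - \alpha'\beta \;=\; (a+ib)c - a(c+id) \;=\; i(bc - ad).
\end{equation*}
Hence the two projective tuples agree precisely when $ad - bc = 0$. The second step is to invoke the derivation in~\autoref{eqs:coeffs:three}, which already established that the affine coefficient $A = \tilde{A}/C$ lies in $\fp$ if and only if $ad - bc = 0$. Combining these two facts gives the conclusion: whenever $E$ has affine $A$-coefficient in $\fp$ and $(a+ib\colon c+id)$ is any projective representative for this coefficient, one automatically has $ad=bc$, so $(a\colon c)$ is the same projective point and therefore also a valid projective coefficient of $E$.

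I do not expect any serious obstacle here, as the whole argument is essentially a one-line identity. The only subtlety worth a brief remark is the degenerate case $a = c = 0$, where $(a\colon c)$ fails to be a point of $\mathbb{P}^1(\fptwo)$; in that situation the hypothesis forces $b\neq d$ (so that $\alpha-\beta\neq 0$), giving $A = 2(b+d)/(b-d)\in\fp$ trivially, and one can rescale the original representative by $-i$ to obtain a non-degenerate $\fp$-pair $(b\colon d)$, which still produces the same $A$. Aside from this boundary case, the proof is an immediate consequence of~\autoref{eqs:coeffs:three} together with the projective-equality criterion above.
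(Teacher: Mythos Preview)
Your proof is correct and lands on the same key identity $ad = bc$ as the paper, but you reach it by a cleaner route. The paper's proof writes out both affine values
\[
A = \frac{2a^2 - 2c^2}{(a-c)^2}
\quad\text{and}\quad
A' = \frac{2(a^2+b^2) - 2(c^2+d^2)}{(a-c)^2 + (b-d)^2}
\]
and spends several lines of algebra reducing $A = A'$ to $bc = ad$. You instead observe that $(\alpha:\beta) \mapsto 2(\alpha+\beta)/(\alpha-\beta)$ is a M\"obius transformation, hence injective on $\mathbb{P}^1$, so equality of affine coefficients is simply the projective-equality condition $\alpha\beta' = \alpha'\beta$; plugging in $(\alpha,\beta) = (a+ib,c+id)$ and $(\alpha',\beta') = (a,c)$ gives $i(bc-ad)=0$ in one line. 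Both arguments then close by invoking~\autoref{eqs:coeffs:three} for the equivalence of $A\in\fp$ with $ad=bc$. Your version is shorter and more conceptual, and you additionally flag the degenerate case $a=c=0$ (where $(a:c)$ is not a projective point and one rescales by $-i$ to get $(b:d)$), which the paper's proof silently ignores.
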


\begin{proof}
    From~\autoref{eqs:coeffs:three} we have the formula to compute the affine coefficient $A$ from the projective
    coefficient $(a \colon {c})$ is:
    \[A = \frac{\tilde{A}}{C} = \frac{2a^2 - 2c^2}{{(a - c)}^2}.\]
    The same equation applied to the affine coefficient $A'$ from the projective coefficient $(a+ib\colon c+id)$
    results in:
    \[A' = \frac{\tilde{A'}}{C'} = \frac{2(a^2 + b^2) - 2(c^2 + d^2)}{{(a - c)}^2 + {(b - d)}^2}.\]
    Hence

    \begingroup\makeatletter\def\f@size{9.5}\check@mathfonts
    \def\maketag@@@#1{\hbox{\m@th\normalsize\normalfont#1}}%
    \begin{align*}
        \begin{split}
            A = A'
            &\Longleftrightarrow
            \frac{2a^2 - 2c^2}{{(a - c)}^2} = \frac{2(a^2 + b^2) - 2(c^2 + d^2)}{{(a - c)}^2 + {(b - d)}^2} \\
            &\Longleftrightarrow
            \big(2a^2 - 2c^2\big)\big({(a - c)}^2 + {(b - d)}^2\big) =
            \big(2(a^2 + b^2) - 2(c^2 + d^2)\big){(a - c)}^2 \\
            &\Longleftrightarrow
            \big(2a^2 - 2c^2\big){(b - d)}^2 = \big(2b^2 - 2d^2\big){(a - c)}^2 \\
            &\Longleftrightarrow
            2(a - c)(a + c){(b - d)}^2 = 2(b - d)(b + d){(a - c)}^2 \\
            &\Longleftrightarrow
            (a + c)(b - d) = (b + d)(a - c) \\
            &\Longleftrightarrow
            bc = ad.
        \end{split}
    \end{align*}\endgroup
    \qed
\end{proof}

\begin{remark}\label{remark:point}
Notice, the $\fp$-invariant given by~\autoref{lemma:curve} easily extends to projective points $(X \colon Z)$ such that
$X,Z\in \fptwo$ and $x(P) = X/Z$ for some point $P$ on $E$.
To be more precise,
any projective point $(x_0 \colon {z_0})$ describes the same affine point in $\fp$ as another projective point
$(x_0+ix_1\colon z_0+iz_1)$ for all $x_0,x_1,z_0,z_1 \in \fp$  if and only if $x_0z_1=z_0x_1$.
\end{remark}

\subsection{Faulting 3-isogenies}\label{subsec:faulty3}

At the beginning of a SIDH key agreement phase, Bob receives
from Alice her public key $\pkalice = \{\phi_A(\Pt_B), \phi_A(\Qt_B), \phi_A(\Pt_B - \Qt_B)\}$.
A malicious Alice may send to Bob another set of image points that may help her to [partially] guess Bob's secret key
$\skbob$.
Another possibility is that an active attacker Eve launches a man-in-the-middle attack by intercepting Alice image
points and relaying to Bob a different set of points of her choice.
Either way, according to the SIDH specifications, Bob has
little defense to distinguish legitimate image points from [carefully chosen] fake
ones~\textsuperscript{\ref{ft:valida}}.

Furthermore, our security model assumes that Eve has the ability of injecting faults during the execution
of~\autoref{alg:strategy:eval3}.
Let us assume that Bob is executing the $i+1$ iteration
of~\autoref{alg:strategy:eval3}, and that he is in the process of computing the degree-3 isogeny
$\phi_i \colon E_{i} \to E_{i+1}$.
Then, by carefully timing her attack, Eve can inject zeroes into the registers $b \mapsto 0$ and $d \mapsto 0$ of
$A_{i+1}$ in the $i$-th iteration of~\autoref{alg:strategy:eval3}.
By zeroing the imaginary part of the coefficient $A_{i+1}$, Eve can infer if it belongs to $\fp$ or to
$\fptwo\setminus \fp$.
In fact, the series of point triplings performed in line~\autoref{alg:strategy:xtple} could produce degenerated
outputs, and from these degenerated outputs arise non-supersingular curves, which are easy to detect.
If the supersingularity is preserved, then the attacker can infer that the injection affected one $A$-coefficient
in $\fp$, otherwise, $A_{i+1}$ cannot possibly live in $\fp$ as proven in~\autoref{lemma:curve}.

\begin{algorithm}[!hbt]
    \renewcommand{\algorithmiccomment}[1]{\hfill//\;#1}
    \caption{Strategy evaluation for computing $3^{e_3}$-isogenies with cyclic kernel generated by a order-$3^{e_3}$
        point $R$ (for more details see Algorithms 19-20 of SIKE specifications~\cite{sike})}
    \label{alg:strategy:eval3}
    \begin{algorithmic}[1]
        \REQUIRE{Point $R \in E$ of order-$3^{e_3}$. The $A$-coefficient of $E$,
            and a strategy $S$ consisting of $e_3 - 1$ positive integers}
        \ENSURE{Codomain curve $E/\langle R \rangle$ of the $3^{e_3}$-isogeny with kernel $\langle R \rangle$}
        \STATE $K \gets []$
        \STATE $k \gets 0$
        \FOR{$i=0$ to $e_3 - 2$}
        \WHILE{$R$ is not an order-$3$ point}\label{alg:strategy:while:in3}
        \STATE $K.push(R)$

        \STATE $R \gets \texttt{xtple}(R,A,S_k)$
        \label{alg:strategy:xtple}

        \STATE $k \gets k + 1$
        \ENDWHILE\label{alg:strategy:while:out3}
        \STATE $A,\;\textrm{coeff} \gets \texttt{xisog}3(R)$\label{alg:strategy:xisog3}

        \FOR{$j=0$ to $k$}
        \STATE $K_j \gets \texttt{xeval}3(K_j, \textrm{coeff})$

        \ENDFOR
        \STATE $R \gets K.pop()$

        \ENDFOR
        \STATE $A,\;\textrm{coeff} \gets \texttt{xisog}3(R)$
        \label{alg:strategy:xisog3:last}
        \RETURN $A$
    \end{algorithmic}
\end{algorithm}

In the following lines, we discuss in detail how to take advantage of this simple but powerful observation given the
public information from the NIST candidate SIKE.

The current public SIKE parameters corresponding to the third round of the NIST standardization process have $A=6$ and
$B=1$, implying that the Montgomery coefficient of the initial curve $E$ is purely defined over $\fp$.
In addition, the public order-$3^{e_3}$ points $P$ and $Q$ have $x$-coordinates over $\fp$, but
$x(D)=x(P - Q)$ lives in $\fptwo\setminus \fp$ (see SIDH specifications from~\cite{sike}).

Let Bob's private key $\skbob$ be represented in radix three as,
\begin{align*}
    \skbob = \sum^{e_3}_{i = 0} s_i3^i.
\end{align*}
To recover $s_0$, we focus on the four order-$3$ points on $E\colon y^2 = x^3 + 6x^2 + x$, which can be generated by
linear combinations
of $P_3 = [3^{e_3 - 1}]P$ and $Q_3 = [3^{e_3 -1}]Q$.
In other words, any order-$3$ point is either
\[P_3, \; Q_3, \; P_3 + Q_3,\text{ or } P_3 - Q_3.\]
By construction, $P_3$ and $Q_3$ lie on $\fp$, while $(P_3 + Q_3)$ and $(P_3 - Q_3)$ do not.
Given that the secret point of SIDH and SIKE has the form $P + [\skbob]Q$, it implies that the first 3-isogeny
$\phi_1 \colon E \to E_1$ has as  kernel generator either $P_3$, $(P_3 + Q_3)$ or $(P_3 - Q_3)$.
Since the coefficient of the curve $E_0$ lies in $\fp$, then the coefficient of $E_1$  will also lie in $\fp$ if and
only if $\phi_1$ has an order-$3$ point generator $K$ such that $x(K) \in \fp$.

In~\autoref{tb:instances} we show how given a successful fault in the first iteration of~\autoref{alg:strategy:eval3}
after line~\autoref{alg:strategy:xisog3}, an attacker can obtain the value $s_0$ by just knowing the information about
which points are in $\fp$ and which ones are not.

As a way of illustration, since $P_3, Q_3 \in \fp$ and $(P_3 + Q_3), (P_3 - Q_3) \in \fptwo\setminus \fp$, we have that
$P_3$ is the one and only kernel living in $\fp$.
So, if the injected fault produces no error it immediately follows that $s_0 = 0$.
Otherwise, if the fault produces an error, then it is still unclear whether the secret value of $s_0$ is one or two.
In this case, by repeating this attack but this time sending to Bob the public key
\[\pkalice' = (P + Q, Q, P),\]
the attacker can find out the value of $s_0$ as follows.
Notice that when Bob executes once again~\autoref{alg:strategy:eval3}, depending on
the value of $s_0$, it will produce as kernel of $\phi_1$ either $P + [2]Q = P - Q$ or $P + [3]Q = P$.
The former point belongs to $\fptwo\setminus \fp$, whereas the latter point lies in $\fp$.
Hence, if the injected fault produces no error, it immediately follows that $s_0 = 2$.
Otherwise, $s_0 = 1$.

Besides the case $P_3 \in \fp$ and $(P_3 + Q_3), (P_3 - Q_3) \in\fptwo\setminus \fp$, there are other five possible
combinations for these three order-$3$ points.
The analysis of the other five cases is similar to the one given above (corresponding to the first row
of~\autoref{tb:instances}).
So although the details are omitted here, we report in~\autoref{tb:instances} six cases where it is possible to guess
the value of $s_0$ at the first attempt using the public key $\pkalice = (P, Q, P-Q)$.
\autoref{tb:instances} also shows twelve cases where a second fault is required.
If a second attempt is required, the attacker must send to Bob the public
key $\pkalice' = (P+Q, Q, P)$.

\begin{remark}
    If the trit $s_0$ was uniformly sampled, the attacker will successfully learn it after
    an average of $1 + \frac{2}{3} = \frac{5}{3}$ attempts.
    More concretely, as shown
    in~\autoref{tb:instances}, there are six cases where the attacker can learn the value of $s_0$ at the first attempt,
    whereas there exist twelve cases that force the attacker to perform two attempts to fully guess the value of $s_0$.
\end{remark}

\begin{table}[!htb]
    \centering
    \begin{tabular}{l||r|r|r|c|c|c}
        \toprule
        \multirow{2}{*}{$x(Q_3)$} &
        \multirow{2}{*}{$x(P_3)$} &
        \multirow{2}{*}{$x(P_3 + Q_3)$} &
        \multirow{2}{*}{$x(P_3 - Q_3)$} &
        \multicolumn{3}{c}{\textbf{Number of instances}} \\
        \cline{5-7}
        & & & & trit $s = 0$ & trit $s = 1$ & trit $s = 2$ \\
        \midrule
        \multirow{3}{*}{in $\fp$}
        & in $\fp$ & not in $\fp$ & not in $\fp$ & 1 & 2 & 2\\
        & not in $\fp$ & in $\fp$ & not in $\fp$ & 2 & 1 & 2\\
        & not in $\fp$ & not in $\fp$ & in $\fp$ & 2 & 2 & 1\\
        \hline
        \multirow{3}{*}{not in $\fp$}
        & not in $\fp$ & in $\fp$ & in $\fp$ & 1 & 2 & 2\\
        & in $\fp$ & not in $\fp$ & in $\fp$ & 2 & 1 & 2\\
        & in $\fp$ & in $\fp$ & not in $\fp$ & 2 & 2 & 1\\
        \bottomrule
    \end{tabular}
    \caption{
        Number of instances to guess $s$ from two random order-$3$ points $P_3$ and $Q_3$ on $E \colon y^2=x^3+6x^2+x$
        such that $\langle P_3,Q_3 \rangle = E[3]$.
        The first instance refers to $(P_3, Q_3, P_3 - Q_3)$, while the second instance to
        $ (P_3', Q_3', P_3' - Q_3')= (P_3 + Q_3, Q_3, P_3)$.
    }
    \label{tb:instances}
\end{table}

\subsection{Forcing $ E_i \colon y^2=x^3+6x^2+x$}
\label{subsec:A6}

In this section, we show how to force Bob's strategy evaluation (\autoref{alg:strategy:eval3}) to repeatedly pass at
iteration $i$ through the curve $\Ec \colon y^2~=~x^3 +6x^2 + x$.
This way, an attacker would be able to guess the secret trit $s_i$, by simply determining whether the $(i+1)$th curve
is either in $\fp$ or not.
As illustrated in~\autoref{alg:strategy:eval3}, the $3^{e_3}$-isogeny procedure is split into
$e_3$ small $3$-isogenies, where the only public curves are the first and last ones:

\begin{align*}
    E_0 = E \to E_1 \to &\cdots \to E_{e-1} \to \tilde{E} = E_{e_3}.
\end{align*}

Each $E_i$ has its Montgomery coefficient $A_i\in\fptwo$.
Note that there exist four different kinds of $3$-isogenies with curve domain $E_{i}$, but only two of them map to
either $E_{i+1}$ or $E_{i-1}$ (this last one goes in the reverse direction and, as mentioned
in~\autoref{susection:basic}, corresponds to the dual isogeny).
However, we know the successive image points $\phi_{1} \circ \phi_{2} \circ \cdots \circ \phi_{i} ([3^{e_3-1-i}]Q)$
determine the dual $3$-isogenies $\hat{\phi}_i \colon E_{i} \to E_{i-1}$,
and the kernel of the $(i+1)$th $3$-isogeny is given by

\begin{align*}
[3^{e_3 - 1 - i}](\phi_{1} \circ \phi_{2} \circ \cdots \circ \phi_{i})(P + [\skbob]Q).
\end{align*}

Notice that the trit $s_i$ completely determines the $3$-isogeny
$\phi_i \colon E_{i-1} \to E_{i}$.
In~\autoref{subsec:faulty3} we detailed how to obtain the first trit $s_0$.
In the following lines we show how to fully recover Bob's private key by forcing him to return to curve $E$ when
processing the $i$th isogeny in $\derive$ algorithm.

Once the attacker knows $s_0$, she can move to $E_1$ by using the 3-isogeny $\phi_1 \colon E_0 \to E_1$ with kernel
generator $[3^{e_3-1}](P+[s_0]Q)$.
Next, we look for an order-$3^{e_3}$ point
\[T \in E_1 : [3^{e_3-1}]\phi_1(Q) \neq \pm [3^{e_3-1}]T.\]
Let
\begin{itemize}
    \item $P' = \phi_1(Q) + [s_0]T$,
    \item $Q' = -T$,
    \item $\hat{\phi}_1 \colon E_1 \to E_0$  be the dual 3-isogeny with kernel generator
    \[[3^{e_3-1}](P' + [s_0]Q') = [3^{e_3-1}]\phi_1(Q).\]
\end{itemize}
When Bob runs $\derive$ algorithm with input $\pkalice = (P', Q', P'-Q')$, we know that the second iteration of the
strategy evaluation will pass through $E$.
Consequently, the attacker can guess $s_1$, as she did for $s_0$, by injecting the fault, now in the second iteration,
and using~\autoref{tb:instances}.
In case she needs two instances, she could build the second one with input
\[\pkalice' = (P' + [3]Q', Q', P' + [2]Q').\]

Now that the attacker knows $s_0$ and $s_1$, she can extend this idea to recover the rest of the trits.
As we have shown, the attacker controls the outcome when Bob computes on $E$.
Therefore, to obtain every $s_i$ from $\skbob$ the attacker needs to provide a proper input public key for Bob such
that at iteration $i$ of~\autoref{alg:strategy:eval3} the curve obtained is $E$.
\autoref{fig:backtracking} summarizes this idea.

\begin{figure}[!hbt]
    \centering
    \begin{tikzcd}[every arrow/.append style={dash}]
        E  \arrow[r,"s_0",->]
        \arrow[rrr, start anchor=east, end anchor=west, no head, yshift=1em, decorate, decoration={brace},
            "\text{known part}" above=3pt] &
        E_1\arrow[r,"s_1",->] &
        \cdots\arrow[r,"s_{i-1}",->] &
        E_{i}\arrow[r,"s_{i}",->]\arrow[d,dotted]
        \arrow[rrr, start anchor=east, end anchor=west, no head, yshift=1em, decorate, decoration={brace},
            "\text{unknown part}" above=3pt]&
        E_{i+1}\arrow[r,"s_{i+1}",->] & \cdots\arrow[r,"s_{e_3 - 1}",->] & E_{e_3}\\
        E\arrow[d,dotted]
        \arrow[rrr, start anchor=east, end anchor=west, no head, yshift=1em, decorate, decoration={brace},
            "\text{backtracking}" above=3pt] &
        E_1\arrow[l,"s_{i-1}",->] &
        \cdots\arrow[l,"s_{i-2}",->]&
        E_{i}\arrow[l,"s_{0}",->] &
        & & \\
        E \arrow[r,"\textcolor{red}{s_{i}}",->]
        \arrow[rrr, start anchor=east, end anchor=west, no head, yshift=-1em, decorate, decoration={brace,mirror},
            "\text{fault impact}" below=3pt] &
        F_{i+1}\arrow[r,"s_{i+1}",->] &
        \cdots\arrow[r,"s_{e_3-1}",->] &
        F_{e_3}
    \end{tikzcd}
    \caption{Attack idea: forcing Bob's strategy evaluation to pass through $E \colon y^2 = x^3 +6x^2 + x$
        at the $i$th iteration, and guess $s_i$ by knowing whether the $(i+1)$th curve is defined over $\fp$ or not.}
    \label{fig:backtracking}
\end{figure}
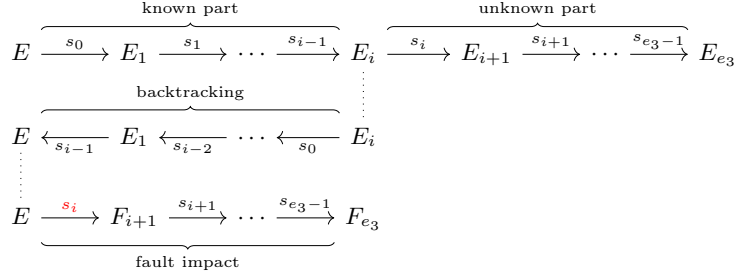

Let us divide the private key $\skbob$ into two parts
\begin{align*}
    \skbob = \sum^{i-1}_{j = 0} s_j3^j + \sum^{e_3}_{t = i} s_t3^t
\end{align*}
and assume the attacker knows $\textrm{sk} = \sum^{i-1}_{j = 0} s_j3^j$.
With this information, she can easily compute the codomain curve $E_i$ of the $3^i$-isogeny $\phi \colon E \to E_i$
with kernel generator
\[[3^{e_3-i}](P+[\mathrm{sk}]Q).\]
The attacker can now forge a public key $\pkalice$ that ensures that the $i$th iteration of Bob's strategy evaluation
passes through $E \colon y^2 = x^3 + 6x^2 + x$ by
\begin{enumerate}
    \item finding an order-$3^{e_3}$ point $T \in E_i : [3^{e_3-1}]T \neq \pm [3^{e_3-1}]\phi(Q)$,
    \item assigning $P' = \phi(Q) + [\mathrm{sk}]T$
    \item assigning $Q' = -T$.
\end{enumerate}
Finally, she  sends
\begin{enumerate}
    \item $\pkalice = (P', Q', P' - Q')$ and,
    \label{pkalice:iteration_i}
    \item $\pkalice' = (P' + [3^i]Q', Q', P' + [3^i-1]Q')$, if needed.
\end{enumerate}

Next, we illustrate how and why the attacker can guess the trit $s_i$ in the following lemma.

\begin{lemma}\label{main:lemma}
Let us assume Bob receives the public key $\pkalice = (P', Q', P' - Q')$ or
$\pkalice' = (P' + [3^i]Q', Q', P' + [3^i-1]Q')$, as previously constructed.
Then, when Bob performs the $\derive$ procedure with the received public key, his $i$th 3-isogenous curve will coincide
with $E \colon y^2 = x^3 + 6x^2 + x$.
\end{lemma}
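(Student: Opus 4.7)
The plan is to show that the $3^i$-isogeny formed by the first $i$ steps of Bob's strategy evaluation has the same kernel as the dual isogeny $\hat\phi \colon E_i \to E$ of the attacker's auxiliary isogeny $\phi \colon E \to E_i$. Because separable isogenies with identical kernels have codomains related by an isomorphism, and $\hat\phi$ has codomain $E$, this identifies Bob's $i$th intermediate curve with a Montgomery model of $E \colon y^2 = x^3 + 6x^2 + x$, which forces its $A$-coefficient to lie in $\fp$.

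The first step is a direct substitution in Bob's derive-phase kernel point. Using $P' = \phi(Q) + [\mathrm{sk}]T$ and $Q' = -T$, one has
\[R \;=\; P' + [\skbob]Q' \;=\; \phi(Q) - [\skbob - \mathrm{sk}]T \;=\; \phi(Q) - [3^i M]\,T,\]
where $M = \sum_{t=i}^{e_3-1} s_t\,3^{t-i}$, since $\mathrm{sk}$ and $\skbob$ share their first $i$ trits; the variant $\pkalice'$ replaces $\skbob$ by $\skbob + 3^i$, and hence $M$ by $M+1$, so it leads to exactly the same shape. By \autoref{alg:strategy:eval3}, the composition of Bob's first $i$ three-isogenies is the $3^i$-isogeny from $E_i$ with kernel $\langle [3^{e_3-i}]R\rangle$, and using $[3^{e_3}]T = \infinity$ this simplifies to $\langle [3^{e_3-i}]\phi(Q)\rangle$ in both cases.

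The final step identifies this subgroup with $\ker\hat\phi$ via the duality property recalled in \autoref{susection:basic}. The attacker's $\phi$ has cyclic kernel $\langle K\rangle$ with $K = [3^{e_3-i}](P + [\mathrm{sk}]Q)$; setting $L = [3^{e_3-i}]Q$, the pair $\{K,L\}$ is related to the natural basis $\{[3^{e_3-i}]P,[3^{e_3-i}]Q\}$ of $E[3^i]$ by the unimodular matrix $\bigl(\begin{smallmatrix}1 & 0 \\ \mathrm{sk} & 1\end{smallmatrix}\bigr)$, so it is itself a basis of $E[3^i]$. Hence $\ker\hat\phi = \langle \phi(L)\rangle = \langle [3^{e_3-i}]\phi(Q)\rangle$, exactly matching the subgroup computed above; the two $3^i$-isogenies from $E_i$ therefore agree up to a codomain isomorphism, which is the claim. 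The only mild technicality is to check that $R$ really has full order $3^{e_3}$ so that Bob's strategy chain does not collapse: this is immediate from $[3^{e_3-1}]R = [3^{e_3-1}]\phi(Q) \neq \infinity$, where the right-hand side is nonzero because $\ker\phi \cap \langle Q\rangle = \{\infinity\}$, itself a consequence of $K \notin \langle Q\rangle$ for $i \geq 1$. There is no genuine obstacle; the entire argument is an algebraic bookkeeping that hinges on choosing $\mathrm{sk}$ so that the coefficient of $T$ in $R$ is divisible by $3^i$.
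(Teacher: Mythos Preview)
Your proof is correct and follows the same route as the paper's: simplify $[3^{e_3-i}](P'+[\skbob]Q')$ to $[3^{e_3-i}]\phi(Q)$ using $\skbob\equiv\mathrm{sk}\pmod{3^i}$, then identify this point as a generator of $\ker\hat\phi$ via the torsion-basis structure. You are in fact more thorough than the paper---spelling out the unimodular change-of-basis that gives $E[3^i]=\langle K,L\rangle$, verifying that $R$ has full order $3^{e_3}$, and flagging that the conclusion is a priori only up to a codomain isomorphism---whereas the paper's proof asserts these points without comment.
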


\begin{proof}
    From the nature of $\pkalice$ and $\pkalice'$, the $i$th 3-isogeny Bob's secret kernel $R$ is
    \[[3^{e_3-i}](P' + [3^i]Q' + [\skbob]Q') = [3^{e_3-i}](P' + [\skbob]Q').\]
    Notice $[3^{e_3-i}]([\skbob]Q') = [3^{e_3-i}]([\mathrm{sk}]Q')$ and thus $R = [3^{e_3-i}]\phi(Q)$.
    By construction, $\phi \colon E \to E_i$ is a $3^i$-isogeny with kernel generator $[3^{e_3-i}](P+[\mathrm{sk}]Q)$.
    That is, $R$ is the kernel generator of the dual $3^i$-isogeny $\hat{\phi} \colon E_i \to E$ since 
    $\langle P + [\skbob]Q, Q \rangle = E\left[3^{e_3}\right]$ holds.
    Consequently, Bob will pass through the $i$th codomain curve $E \colon y^2 = x^3 + 6x^2 + x$.
    On that basis, Bob's $(i+1)$th 3-isogeny will have kernel either $P_3$, $(P_3 + Q_3)$, or $(P_3 + [2]Q_3) = (P_3 - Q_3)$
    as in~\autoref{eq:instances3}, all of them lying on $E \colon y^2 = x^3 + 6x^2 + x$.
    Consequently, our attacker has the possibility of correlating the trit $s_i$ by guessing whether Bob passed through
    $\fp$ 3-isogeny codomain curves, by applying~\autoref{lemma:curve} and using~\autoref{tb:instances}.
    \begin{align}
    \begin{split}
        P_3 & = [3^{e_3 - 1 - i}]\left(\hat{\phi}(P') + [\textrm{sk}]\hat{\phi}(Q')\right), \\
        P_3 + Q_3 & = [3^{e_3 - 1 - i}]\left(\hat{\phi}(P') + [\textrm{sk} + 3^i]\hat{\phi}(Q')\right),\\
        P_3 - Q_3 & = [3^{e_3 - 1 - i}]\left(\hat{\phi}(P') + [\textrm{sk} + 2\cdot3^i]\hat{\phi}(Q')\right).
    \end{split}
    \label{eq:instances3}
    \end{align}
\end{proof}

\subsection{Proposed attack}
\label{subsec:attack3}

Let $\mathbf{O}_{\skbob}(\pkalice, i)$ be an oracle taking as input an SIDH/SIKE public key
$\pkalice = \left(P', Q', P'-Q'\right)$ and an integer $0 \leq i \leq e_3 - 2$.
The oracle internally performs a strategy evaluation with Bob's static private key $\skbob$ and injects a fault at the
$(i+1)$th iteration
(from line~\autoref{alg:strategy:xisog3} of~\autoref{alg:strategy:eval3}) as detailed in~\autoref{subsec:A6}.
Then it outputs $1$ if supersingularity is preserved for $E_{e_3}$, and $0$ otherwise.
Based on~\autoref{subsec:faulty3} and~\autoref{subsec:A6}, and using $\mathbf{O}_{\skbob}(\pkalice, i)$,
\autoref{alg:attack3} correctly reconstructs Bob's private key $\skbob$ with about
$\frac{5e_3}{3} \approx \frac{5\log_2(p)}{6\log_2(3)} \approx 0.53\log_2(p)$ oracle calls.

\begin{algorithm}[!hbt]
    \renewcommand{\algorithmiccomment}[1]{\hfill//\;#1}
    \caption{Bob's private key recovery by injecting zeros on the $A$-coefficients}
    \label{alg:attack3}
    \begin{algorithmic}[1]
        \REQUIRE{Public parameters $P$, $Q$, and $E$ from SIDH. The oracle $\mathbf{O}_{\skbob}()$ as defined
        in~\autoref{subsec:attack3}, and $\tilde{E} = E / \langle P +[\skbob]Q\rangle$}
        \ENSURE{Bob's private key $\skbob = \sum_{i=0}^{e_3-1}s_i3^i$}
        \STATE $\textrm{sk} \gets 0$
        \FOR{$i=0$ to $e_3 - 2$}
        \STATE $\pkalice \gets \left(P', Q', P'-Q'\right)$ as in~\autoref{subsec:A6}
        \STATE Compute $P_3$, $Q_3$, and $(P_3 - Q_3)$ following~\autoref{eq:instances3}
        \STATE $b \gets \mathbf{O}_{\skbob}(\pkalice, i)$
        \label{alg:line:oracle:1st}
        \STATE $c_1 \gets $ number of elements in $\left\{P_3, P_3 + Q_3, P_3 - Q_3\right\}$ lying on $\fp$
        \STATE $c_0 \gets $ number of elements in $\left\{P_3, P_3 + Q_3, P_3 - Q_3\right\}$ not living in $\fp$
        \IF{$c_b = 1$}
        \STATE Guess $s_i$ using~\autoref{tb:instances}
        \STATE $\textrm{sk} \gets \textrm{sk} + s_i3^i$
        \ELSE
        \STATE $\pkalice' \gets \left(P' + [3^i]Q', Q', P' + [3^i - 1]Q'\right)$
        \STATE $b' \gets \mathbf{O}_{\skbob}(\pkalice', i)$
        \label{alg:line:oracle:2nd}
        \IF{$c_{b'} = 1$}
        \STATE Guess $s_i$ using~\autoref{tb:instances}
        \STATE $\textrm{sk} \gets \textrm{sk} + s_i3^i$
        \ELSE
        \STATE Guess $s_i$ by discard from the previous 2 options
        \STATE $\textrm{sk} \gets \textrm{sk} + s_i3^i$
        \ENDIF
        \ENDIF
        \ENDFOR
        \STATE Brute force search on $s_{e_3-1} \in \{0,1,2\}$ such that
        $E / \langle P + [\textrm{sk} + s_{e_3-1}3^{e_3-1}]Q\rangle = \tilde{E}$
        \RETURN $\textrm{sk} + s_{e_3-1}3^{e_3-1}$
    \end{algorithmic}
\end{algorithm}
\footnotetext{$E'$ is the curve determined by $P'$, $Q'$ and $(P'-Q')$.
We set for the case $i=0$, $\psi \colon E \to E$ as the identity map, and $E' = E$. }

\section{Experiments and countermeasures}\label{sec:experiments:discussion}

As a proof-of-concept of our attack, we implemented \autoref{alg:attack3} using the SIDH Library v3.4
(C Edition)\footnote{\url{https://github.com/microsoft/PQCrypto-SIDH}\label{footnote:sidh:lib}}.
Our software implementation simulates the fault injection at lines \autoref{alg:line:oracle:1st}
and \autoref{alg:line:oracle:2nd} of \autoref{alg:attack3} as follows.

The oracle $\mathbf{O}_{\textrm{sk}}(\textrm{pk}, i)$ accesses the curve $E_i \colon y^2 = x^3 +6x^2 + x$, injects
zeros in the $A$-coefficient of $E_{i+1}$, and then performs a supersingularity check by testing  at the $(i+2)$th
iteration $\texttt{xtpl}(R, A) = \infinity$~\footnote{Isogenies preserve the torsion of points, and because $R$ is
the kernel of the next $3$-isogeny, then verifying $R$ is an order-$3$ point determines supersingularity.}.
We ensure that the input public key $\textrm{pk} = (P', Q', P' - Q')$ for $E' \colon y^2 + x^3 + Ax^2 + x$
has been correctly built, by internally verifying $E_i \colon y^2 = x^3 +6x^2 + x$,  using the isogeny
$\psi \colon E' \to E_i$ with kernel generator $[3^{e_3 -i}](P' + [\textrm{sk}_3 \bmod 3^i]Q') = [3^{e_3 - i}]\psi(Q)$
where $E[3^{e_3}] = \langle P, Q \rangle$.

Furthermore, we verify that $P_3$, $P_3 + Q_3$, and $P_3 - Q_3$ are
different 3-order points on $E_i$ for each instance.
Additionally, we set the initial $3^{e_3}$-order points $P$ and $Q$ on $E \colon y^2 = x^3 +6x^2 + x$ as the public
SIDH/SIKE $3^{e_3}$-torsion points.

\autoref{tb:experiments} summarizes the timings and oracle calls simulating the proposed attack,
and reports the average running time of $\frac{5e_3}{3} = \frac{5\log_2(p)}{6\log_2(3)} \approx 0.53\log_2(p)$ oracle
calls.
All of our experiments successfully recover Bob's private key, and were executed on a 2.3 GHz 8-Core
Intel Core i9 machine with 16GB of RAM, using gcc version 11.1.0. In particular, we use the optimized configuration
from the SIDH Library v3.4 (see~\autoref{footnote:sidh:lib}).
Our software implementation is freely available at \url{https://github.com/FaultyIsogenies/faulty-isogeny-code}\;.

\begin{table}[!htb]
    \centering
    \begin{tabular}{l|l|r|r|r}
        \toprule
        Instances &
        $e_3$ &
        Clock Cycles &
        Seconds &
        Oracle calls \\
        \midrule
        SIDHp434 & 137 & 4249 & 4.62 & 226 \\
        SIDHp503 & 159 & 7257 & 7.76 & 263 \\
        SIDHp610 & 192 & 17424 & 18.64 & 318 \\
        SIDHp751 & 239 & 40638 & 42.77 & 396 \\
        \bottomrule
    \end{tabular}
    \caption{
        All measurements are given in millions of clock cycles, and they correspond to the average of 1000 random
        instances
    }
    \label{tb:experiments}
\end{table}

We emphasize that our software C-code implementation is a proof-of-concept simulator that helps to illustrate and
verifying the correctness of the attack.
So, omitting each verification step from the code will reduce the attack latency, giving a faster key recovery.

\Paragraph{Implications to SIKE}
Recently, Ueno et al.~\cite{UenoXTITH21} analyze a plaintext-checking Oracle-based attack;
they focus on the encryption computation from the SIKE decapsulation procedure.
However, our attack targets SIDH derive procedure, but a combination with the plaintext-checking oracle would give a
fault-injection attack on the SIKE decapsulation procedure requiring $\frac{5}{3} + 1$ queries per iteration and,
therefore, $\frac{8e_3}{3} \approx \frac{8\log_2(p)}{6\log_2(3)} \approx 0.84\log_2(p)$ oracle calls.
This extra query comes from the nature of our attack that constructs not valid inputs for decapsulating messages
(as different from attacking SIDH).~\autoref{tb:expected:runtime} summarizes the expected running time for a fully
private key recovery by attacking $\Decaps$ procedure.

\begin{table}[!htb]
    \centering
    \begin{tabular}{l|l|r|r}
        \toprule
        \multirow{2}{*}{Instances} & \multirow{2}{*}{$e_3$} & \multicolumn{2}{c}{Oracle calls} \\ \cline{3-4}
        & & \cite{UenoXTITH21} & This work \\
        \midrule
        SIKEp434 & 137 & 230 & 364 \\
        SIKEp503 & 159 & 266 & 422 \\
        SIKEp610 & 192 & 323 & 512 \\
        SIKEp751 & 239 & 398 & 630 \\
        \bottomrule
    \end{tabular}
    \caption{
        Expected number of plaintext-checking oracle calls and injected faults required to recovering a fixed Bob's
        private key when attacking $\Decaps$.
        In the third column we predict that the required number of oracle calls is just $\frac{5}{3}e_3$, which is
        slightly smaller than the $2e_3$ oracle calls reported in~\cite[Table 2]{UenoXTITH21}.
    }
    \label{tb:expected:runtime}
\end{table}

\Paragraph{Countermeasures}
A pairing test could work as a countermeasure, but the attacker can manipulate the points to bypass this test.
Recall, $P' = \phi(Q) + [\mathrm{sk}]T$ and $Q' = -T$ form a basis, then there is $\mu$ such that
$e(P', Q') = e(P, Q)^\mu$.
So she forces to pass the pairing check by looking for a point $T$ such that $\theta=\mu/2^{e_2}$ is a quadratic residue
and $\sqrt{\theta}$ is invertible modulo $3^{e_3}$.
If that is the case, she sets as basis input $P' \gets [1/\sqrt{\mu}]P'$ and $Q' \gets [1/\sqrt{\mu}]Q'$.
If not, we try with another different $T$.

In~\cite{TassoFMP21}, Tasso \textit{et al.} implemented Ti's attack and additionally proposed a countermeasure to it.
Essentially, they propose to check whether the reconstructed $A$-coefficient from the computed public key
$\pkalice = (P', Q', R)$ equals the last $A$-coefficient in $\keygen$ (output of
line~\autoref{alg:strategy:xisog3:last} of~\autoref{alg:strategy:eval3}).
If there is equality then Bob ensures that $R = (P' - Q')$ and returns $\pkalice$; otherwise, an attack is detected.

Nevertheless, the new attack in this paper and the ones from~\cite{UenoXTITH21} and~\cite{XagawaIUTH21} use valid
public keys such that $R = (P' - Q')$ always holds, and thus our attack is completely immune to Tasso \textit{et al.}'s
countermeasure and any other variant of it.

From the mechanism of our attack, each instance given by~\autoref{eq:instances3} is on an intermediate curve in
Bob's secret 3-isogeny chain.
Also, the attack is not limited to use exclusively the curve $E \colon y^2 = x^3 + 6x^2 + x$, we only require a
Montgomery curve with affine $A$-coefficient in $\fp$.

Naively, to avoid the attack, Bob should check if any of its intermediate secret 3-isogeny computations returns an
$A$-coefficient in $\fp$, if this happens then Bob would reject Alice's public key.
However, such a countermeasure opens the door to another attack:

\begin{enumerate}
    \item The attacker makes a first guess with $s_0 = 0$
    \item Sends the corresponding $\pkalice~$\footnote{$\pkalice$ and $\pkalice'$ are computed by means
    of~\autoref{eq:instances3}\label{footnote:countermeasure}.} to Bob
    \item If Bob rejects $\pkalice$, then she knows her guess was correct;
    \item If Bob accepts $\pkalice$, then she make another guess with $s_0=1$
    \item Sends a different public key $\pkalice'$~\textsuperscript{\ref{footnote:countermeasure}} to Bob
    \item If Bob rejects $\pkalice'$, then she knows $s_0 = 1$, otherwise, $s_0=2$
    \item Repeat the same attack flow to recover $s_1, \ldots, s_{e_3 - 2}$.
\end{enumerate}

Remarkably, the attack outlined above does not rely on fault injections, but only on oracle calls.
This makes it more dramatic than the attack that Bob tries to avoid in the first place.

Since ~\autoref{lemma:curve} characterizes all curves defined over $\fp$ regardless of whether the affine curve
coefficients belong to $\fp$, randomizing the curve coefficients (assuming multiplying by a random $\fptwo$-element) will not help.
The same goes for randomizing points.
In other words, all Bob's computations operate with projective points and curves defined over
$\fptwo\setminus\fp$, and then Bob cannot be possibly aware that he is processing $\fp$-curves.
Therefore, the attack is not affecting the constant-time nature of SIDH/SIKE implementations.

A plausible countermeasure consist of using a commutative diagram by pushing forward Bob's $3^{e_3}$-isogeny $\psi_B : E_A \rightarrow E_{AB}$ through a random isogeny $\rho : E_A \rightarrow E'_A$ of degree not divisible by 3 and obtain an isogeny $\psi'_B : E'_A \rightarrow E'_{AB}$, then push forward the dual of $\rho$ through $\psi'_B$ to finally obtain $\hat{\rho}' : E'_{AB} \rightarrow E_{AB}$.

For instance, Bob can choose $\rho$ to be of degree $2^k$-isogenies, for some sufficiently large $k$.
To be more precise, Bob would 
\begin{enumerate}
    \item sample an arbitrary order-$2^k$ point $R \in E_A$;
    \item construct $\rho : E_A \rightarrow E'_A$ of kernel $R$;
    \item push Alice's public key $\pkalice$ and a point $D$ through $\rho$, $\rho(D)$ generating $\hat{\rho}$;
    \item compute $\psi'_B : E'_A \rightarrow E'_{AB}$ using $\rho(\pkalice)$, push $\rho(D)$ through $\psi'_B$ to get $D'$;
    \item finally, compute the codomain curve $E_{AB}$ of the isogeny $\hat{\rho}'$ of kernel $D'$.
\end{enumerate}

This randomized countermeasure increases the number of queries, $0.53\log_2(p)$, of our attack by a factor of about $2^k$,  suggesting large (enough) values for $k$ since, asymptotically speaking, it remains the same complexity when $k$ is small compared with $e_2$.
Consequently, a secure countermeasure with, for example, $k=\frac{e_2}{2}$ would give a 2x of slowdown of Bob's computations, while the more conservative choice of $k=e_2$ would be 3x slower.

\Paragraph{Discussion about the feasibility of our attack}
As we have seen, the attack presented in this paper requires injecting
faults in a precise step of the SIDH strategy evaluation procedure.
More concretely, the attacker
must inject zeroes in the imaginary part of the coefficient $A_{i+1}$ at
iteration $i + 1$ of line~\autoref{alg:strategy:xisog3} of~\autoref{alg:strategy:eval3}.

But for a real implementation of SIDH and SIKE,
how feasible/realistic
would be to inject such faults?

In~\cite{TassoFMP21} the authors report that electromagnetic injection
is considerably difficult to synchronize with the execution of SIKE.
Because of this difficulty, the real-scenario attack implemented by the authors
achieved a rather modest experimental success rate of just 0.62\%, which is
relatively inefficient compared with the expected theoretical 50\% success
rate attributed to Ti's attack~\cite[Remark 7]{TassoFMP21}.

One advantage that Ti's attack offers, is that the perturbations
do not need to be placed in a precise step of Key Generation,
provided that the faults are injected before the victim starts computing and evaluating isogenies.
Moreover, with high probability, it suffices that the torsion points are perturbed by flipping any single bit of the
points.

Due to the high timing synchronization  required by our attack, its instantiation using  electromagnetic injection
would appear to be just too difficult for being successfully launched in a real implementation of SIDH.

On the other hand, software oriented fault attacks appear to be more
promising for our attack.

For example, the attack presented in~\cite{campos21} was executed on a ChipWhisperer-Lite board equipped
with a 32-bit STM32F303 ARM Cortex-M4 processor, using the
SIKEp434 Cortex-M4 implementation of~\cite{Peter2019}.
Under the assumption that the attacker knows the exact code locations where the
faults must be injected, the authors reports close to 100\% of efficiency in their attack.
The faults were injected by clock glitching, which forced the
ChipWhisperer card to skip an instruction.
In~\cite{XagawaIUTH21}, the authors report a instruction-skipping generic fault
attack that targets all NIST PQC Round 3 KEM candidates including
SIKE. The attack was executed on a ChipWhisperer cw308 UFO base-board, which
permits fault-injection attacks using clock glitching.

It is conceivable that we can adapt the attacks of~\cite{campos21,XagawaIUTH21} to our setting, but in our case we
should rather focus on perturbing the memory locations where the imaginary parts of the coefficient $A$ are stored.
\section{Concluding remarks}\label{sec:conclusions}
In this paper, we presented a new vulnerability based on deciding whether Bob's $3^{e_3}$-isogeny computation passes
through a curve with $A$-coefficient in $\fp$.
This way we propose a new fault-injection attack that

\begin{itemize}
    \item belongs to the same family of~\cite{GalbraithPST16}, known sometimes as
    \emph{reaction attacks}~\cite{QuehenKLMPPS21},
    \item has similar query complexity as the attacks in~\cite{XagawaIUTH21} and~\cite{GalbraithPST16},
    \item it is different to mitigate than the one previous SIDH fault attack, known as Ti's
    attack~\cite{DBLP:conf/pqcrypto/Ti17},
    \item can be combined with the side-channel attack  in~\cite{UenoXTITH21} to achieve full key recovery in SIKE.
\end{itemize}

From the description of the proposed attack, one can easily see that the best scenarios where it applies are when the
individual isogeny degree is as small as possible, namely 2 or 3.
We considered 3 in this work because the computed isogenies in SIDH/SIKE are of degree 3 and (almost always) 4.
We believe our fault-injection attack can be transformed into a side-channel attack using the techniques
of~\cite{UenoXTITH21} and trying to force working with purely $\fp$ values, but this appears to be a formidable task
that we leave here as future work.

Now, when the individual isogenies have higher (prime) degrees, as in B-SIDH, the attack is unlikely to work as
efficiently as 3-isogenies due to the significantly smaller proportion of $\fp$-isogenies (2 $\fp$-isogenies in
average).
However, the number of queries remains a polynomial factor multiplying the isogeny degree.
Since in B-SIDH, for instance, the prime isogeny degrees are bounded (by at most $2^{16}$) so that the isogenies are
efficiently computable, reaching this number of queries should not be a big concern.

To the best of our knowledge, there is no previous attack in which the attacker can inject such
amount of zeros in memory (or registers) as we require our attacker to do.
We consider finding alternative methods to perform this injection in a controlled way as future work.
We would also like to remark that the attack's timing constraints are similar to those in the loop-abort attack
presented by G\'elin and Wesolowski in 2017~\cite{DBLP:conf/pqcrypto/GelinW17}.
In addition, we believe our contribution opens a new path of attacks not considered before.
It might be later implemented with the appropriate tools as it happened previously in the theoretical attack by Ti in
2017, recently implemented in 2021 by Tasso et al~\cite{TassoFMP21}.

\Paragraph{Acknowledgements}
We thank anonymous reviewers for their helpful comments to improve this work.
We also thank Krijn Reijnders and Michael Meyer for suggesting the randomized countermeasure.

 \bibliographystyle{splncs04}
 \bibliography{references}
 \end{document}